\theoremstyle{plain}
\newtheorem{theorem}{Theorem}[section]
\newtheorem{proposition}[theorem]{Proposition}
\theoremstyle{definition}
\newtheorem{definition}[theorem]{Definition}
\theoremstyle{remark}
\def\pi{positive implicative }
\date{}
\begin{document}
\title{\bf Isomorphism and Fuzzy Subspaces}
\author{\textbf{Iffat Jahan}\\\\       
    Department of Mathematics, Ramjas College\\
    University of Delhi, Delhi, India \\
    ij.umar@yahoo.com \\}
\date{}
\maketitle
    \medskip\noindent
    \begin{abstract}
    This work provides  a necessary and sufficient condition for the isomorphism of two fuzzy subspaces in terms of their dimensions.\\\\
\noindent \text{Keywords:}  Fuzzy subspace; Basis of a fuzzy subspace; Dimension of a fuzzy subspace
    \end{abstract}

    \baselineskip15pt
    \parskip4pt

    \section{Introduction}
The notion of a fuzzy vector space has been introduced by Katsaras and Liu\cite{Katsaras} in 1967.  This notion was further studied and explored in \cite{Abdukhalikov, Lubczonok}. The dimension function of fuzzy subspaces is introduced and discussed by Lubczonok in \cite{ Lubczonok}. Moreover, some basic results on the fuzzy  basis of a fuzzy subspace are also provided therein.  Kumar in \cite{Kumar} has raised an important question regarding the dimension of isomorphic subspaces. In the theory of ordinary vector spaces,  a well known fact is that two vector spaces of finite dimensions are isomorphic if and only if they have the same dimensions. Unfortunately, it is that result, the author \cite{Kumar} wanted to extend to the framework of fuzzy subspaces and discovered that it is not valid. In this work, we provide a complete answer of his question by establishing a necessary and sufficient condition for the isomorphism of two fuzzy subspaces in terms of their dimensions.    \section {Preliminaries}
   Throughout this paper, we shall denote by $U$ and $V$, the ordinary vector spaces over a field $F$. Here we recall same basic definitions and results which are used during the development of present work.
   \begin{definition}\cite{Katsaras}
   	A fuzzy set $\mu$ in $U$ is called a fuzzy subspace of $U$ if for $x,y \in U$ and $\alpha \in F$  \vspace{-.3cm}
   	\begin{enumerate}
   		\item[(i)] $\mu(x-y) \geq min \{\mu(x), \mu(y)\}$,\vspace{-.3cm}
   		\item[(ii)] $\mu(\alpha x) \geq \mu(x)$.
   	\end{enumerate}	 
   The following results on fuzzy subspaces are well known and can be verified easily: \\If $\mu $ is a fuzzy  subspace of $U$ then 
   \begin{enumerate}\vspace{-.3cm}
   	\item [(i)] $\mu (0) \geq \mu (x) $ for $x\in U$,\vspace{-.3cm}
   	 \item [(ii)] $\mu(\alpha x) = \mu(x)$ for $x\in U$ and $\alpha \in F$.
   \end{enumerate} 
   \end{definition}
   \noindent Note that  $sup~\mu= \displaystyle {sup_{x \in U} \{\mu(x)\}}=\mu(0).$
\noindent Now onwards, $\mu $ and $Im\mu$  shall denote a fuzzy  subspace of $U$ and the image set $\{\mu(x):x \in U\}$ respectively. Further, we recall:
\begin{theorem} \cite{Kumar} \label{2.2}
Let $\mu$ be a fuzzy subset of $U$. Then, $\mu$ is a fuzzy subspace of $U$ if and only if each non-empty level subset $\mu_t,~t \in [0,1]$, is a subspace of $U$.
\end{theorem}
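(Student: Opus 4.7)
The plan is to prove the two implications of this level-set characterization separately, treating each defining condition of a fuzzy subspace.

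For the forward direction, assume $\mu$ is a fuzzy subspace and fix $t \in [0,1]$ with $\mu_t = \{x \in U : \mu(x) \geq t\}$ non-empty. First I would verify closure under subtraction: for $x, y \in \mu_t$, apply condition (i) of the definition to get $\mu(x-y) \geq \min\{\mu(x),\mu(y)\} \geq t$, so $x-y \in \mu_t$. Next, for $\alpha \in F$ and $x \in \mu_t$, condition (ii) gives $\mu(\alpha x) \geq \mu(x) \geq t$, so $\alpha x \in \mu_t$. Containment of $0$ follows by picking any $x \in \mu_t$ and writing $0 = x-x$ using closure under subtraction already proved. This shows $\mu_t$ is a subspace.

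For the converse, suppose every non-empty level set is a subspace. Given arbitrary $x, y \in U$, set $t = \min\{\mu(x),\mu(y)\}$; then $x,y \in \mu_t$, which is non-empty and hence a subspace by hypothesis, so $x - y \in \mu_t$, giving $\mu(x-y) \geq t = \min\{\mu(x),\mu(y)\}$ — this establishes condition (i). For condition (ii), given $x \in U$ and $\alpha \in F$, take $t = \mu(x)$; then $x \in \mu_t$, a subspace, so $\alpha x \in \mu_t$ and hence $\mu(\alpha x) \geq \mu(x)$.

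There is no serious obstacle here — the argument is a routine transfer between the crisp-subspace axioms for the level sets and the fuzzy axioms for $\mu$. The only mildly delicate point is choosing the right threshold $t$ on each implication (namely $t = \min\{\mu(x),\mu(y)\}$ for the subtraction axiom and $t = \mu(x)$ for the scalar axiom) so that the value one wishes to bound below is exactly the threshold defining the level set.
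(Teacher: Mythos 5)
Your proof is correct and is the standard argument; the paper itself states this theorem without proof, merely citing Kumar, and your routine transfer between the fuzzy axioms and the crisp subspace axioms for the level sets (with the thresholds $t=\min\{\mu(x),\mu(y)\}$ and $t=\mu(x)$ chosen exactly as one would expect) is precisely how the cited source establishes it.
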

\noindent Unless mentioned otherwise, we shall use the terminology of level subspaces for level subsets $\mu_t$, where $t \in [0,1]$.

\noindent The following results and definitions are due to Lubczonok  : 
\begin{definition}\cite{Lubczonok}
	Let $\mu$ be a fuzzy subspace of $U$. Then, an ordinary basis $\beta$ of $U$ is called a fuzzy basis of $\mu$, if
	\[\mu ( \alpha_1 x_1+\cdots+\alpha_n x_n)= min \{\mu(x_1), \cdots \mu(x_n)\}, \]
\end{definition} \vspace{-.3cm}
\noindent when $\{x_1, \cdots, x_n\}$ is any finite subset of $\beta$ and $\alpha_{i} \in F$.

\begin{theorem} \cite{Lubczonok} \label{2.6}
Every fuzzy subspace of a finite dimensional vector space has a fuzzy basis.
\end{theorem}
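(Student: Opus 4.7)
The plan is to use the fact that in a finite dimensional $U$ the image $Im\mu$ is forced to be finite, so the level subspaces form a finite strictly decreasing chain, and a basis built by successive extension along that chain automatically behaves like a fuzzy basis. First I would note that by Theorem~\ref{2.2} every $\mu_t$ is a subspace of $U$, and for distinct $s>t$ in $Im\mu$ the inclusion $\mu_s\subsetneq\mu_t$ is strict, since any $x$ with $\mu(x)=t$ lies in $\mu_t\setminus\mu_s$. A strictly decreasing chain of subspaces of a finite dimensional space has length at most $\dim U+1$, so $Im\mu$ is finite; write $Im\mu=\{t_1>t_2>\cdots>t_k\}$, with $\mu_{t_k}=U$ and $\mu_{t_1}$ the smallest level subspace.

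Next I would build $\beta$ by successive basis extensions. Take any ordinary basis $B_1$ of $\mu_{t_1}$, and, having obtained a basis $B_{i-1}$ of $\mu_{t_{i-1}}$, extend it to a basis $B_i=B_{i-1}\cup B_i'$ of $\mu_{t_i}$ using the standard basis extension theorem. The key observation is that each newly added $x\in B_i'$ satisfies $\mu(x)=t_i$: by construction $x\in\mu_{t_i}$, and if instead $\mu(x)\geq t_{i-1}$ then $x$ would lie in $\mu_{t_{i-1}}=\mathrm{span}(B_{i-1})$, contradicting linear independence of $B_i$. Set $\beta=B_k$; this is an ordinary basis of $U$, and each $x\in\beta$ carries a well defined level index $j(x)$ with $\mu(x)=t_{j(x)}$, determined by the unique layer $B_{j(x)}\setminus B_{j(x)-1}$ it belongs to.

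Finally I would verify the fuzzy basis identity. Given distinct $x_1,\ldots,x_n\in\beta$ with nonzero scalars $\alpha_i\in F$, put $j^{*}=\max_i j(x_i)$ so that $\min_i\mu(x_i)=t_{j^{*}}$. Each $x_i$ lies in the subspace $\mu_{t_{j^{*}}}$, whence the inequality $\mu(\alpha_1 x_1+\cdots+\alpha_n x_n)\geq t_{j^{*}}$ is immediate from the subspace axioms. The matching upper bound is the one point of substance: if $\mu(\sum\alpha_i x_i)\geq t_{j^{*}-1}$ were possible, then $\sum\alpha_i x_i\in\mathrm{span}(B_{j^{*}-1})$, and subtracting the terms with $j(x_i)<j^{*}$ would exhibit a nontrivial linear combination of elements of $\beta\setminus B_{j^{*}-1}$ as a vector in $\mathrm{span}(B_{j^{*}-1})$, contradicting linear independence of $\beta$. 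The case $j^{*}=1$ is trivial since $\mu$ is bounded above by $t_1$. This upper bound argument, which aligns jumps in the basis with jumps in $\mu$ via the nested structure of the $B_i$, is where the main work of the proof lies.
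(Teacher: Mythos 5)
Your proof is correct; note that the paper itself offers no proof of this statement (it is quoted from Lubczonok as a preliminary), so the comparison is with the route the paper's surrounding machinery suggests rather than with a written argument. That route is the one-element extension lemma (the unnumbered theorem preceding Theorem \ref{2.5}): starting from a subspace one adjoins vectors one at a time so that fuzzy independence is preserved, and Theorem \ref{2.5} is the finite iteration of this. You instead build the basis globally in layers indexed by the finitely many values $t_1>\cdots>t_k$ of $Im\,\mu$ (your finiteness argument via the strictly decreasing chain of level subspaces is sound), extending an ordinary basis along $\mu_{t_1}\subseteq\cdots\subseteq\mu_{t_k}=U$ and checking the fuzzy-basis identity once at the end; the two observations that carry the check --- every vector added at stage $i$ has $\mu$-value exactly $t_i$, and a combination whose minimal value is $t_{j^*}$ cannot lie in $\mathrm{span}(B_{j^*-1})$ without violating linear independence of $B_{j^*}$ --- are both argued correctly, and the edge case $j^*=1$ is handled. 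Your layered construction is essentially the same device the paper uses later in the sufficiency half of its main theorem (the subspaces $U_{(i,j)}$), so your argument has the side benefit of making Section 3 self-contained. One small caveat worth stating explicitly: the definition of fuzzy basis as printed quantifies over all $\alpha_i\in F$, including zero, under which reading the theorem would fail; you (correctly) restrict to nonzero scalars, which is Lubczonok's actual definition.
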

\remark Let $\mu$ be a fuzzy subspace of $U$ such that $dim \mu =n$. Then, $|Im~\mu| \leq n+1$ where $|Im~\mu| $ denotes the cardinality of $Im~ \mu$.
\begin{theorem} \cite{Lubczonok}
Let $\mu$ be a fuzzy subspace of a finite dimensional vector space $U$. If $\beta$ and $\beta^*$ are any two fuzzy basis of $\mu$, then 
\[\sum_{x \in \beta} \mu(x) = \sum_{x \in \beta^*} \mu(x).\]
\end{theorem}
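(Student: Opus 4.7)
The plan is to show that for any fuzzy basis $\beta$ of $\mu$, and any $t\in[0,1]$, the set $\beta^{(t)}=\{x\in\beta : \mu(x)\ge t\}$ is an ordinary basis of the level subspace $\mu_t$. Since $\dim\mu_t$ is intrinsic to $\mu$ (independent of $\beta$), the counting function $t\mapsto|\beta^{(t)}|$ is basis-free, and the sum $\sum_{x\in\beta}\mu(x)$ can be rewritten purely in terms of these dimensions and the values in $\mathrm{Im}\,\mu$.

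The first step is the basis claim. Linear independence of $\beta^{(t)}$ is inherited from $\beta$. For spanning: if $v\in\mu_t$ then $\mu(v)\ge t$; expressing $v=\alpha_1x_1+\cdots+\alpha_n x_n$ with the $x_i\in\beta$ and invoking the fuzzy basis property gives $\min\{\mu(x_1),\ldots,\mu(x_n)\}=\mu(v)\ge t$, so each $x_i\in\beta^{(t)}$. This yields $|\beta^{(t)}|=\dim\mu_t$ for every $t$, and incidentally shows that the values $\mu(x)$ for $x\in\beta$ exhaust $\mathrm{Im}\,\mu$.

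Next, enumerate $\mathrm{Im}\,\mu=\{t_0>t_1>\cdots>t_k\}$ (a finite set by Theorem \ref{2.6} and the remark bounding $|\mathrm{Im}\,\mu|$). Setting $n_i=|\{x\in\beta : \mu(x)=t_i\}|$, we have
\[
n_i=|\beta^{(t_i)}|-|\beta^{(t_{i-1})}|=\dim\mu_{t_i}-\dim\mu_{t_{i-1}}
\]
for $i\ge 1$, and $n_0=\dim\mu_{t_0}$. Therefore
\[
\sum_{x\in\beta}\mu(x)=\sum_{i=0}^{k}n_i t_i=\sum_{i=0}^{k}\bigl(\dim\mu_{t_i}-\dim\mu_{t_{i-1}}\bigr)t_i,
\]
with the convention $\dim\mu_{t_{-1}}=0$. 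The right-hand side depends only on $\mu$ itself, not on the chosen fuzzy basis. Applying the same formula to $\beta^{*}$ gives the identical value, which proves the theorem.

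The only nontrivial step is the basis claim in the first paragraph, and it is an immediate consequence of the defining property of a fuzzy basis; no real obstacle is anticipated. The arithmetic telescoping is routine, and finiteness of $\mathrm{Im}\,\mu$ ensures the sums make sense.
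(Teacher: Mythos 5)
The paper does not actually prove this statement --- it is quoted from Lubczonok with a citation and no proof --- so there is no internal argument to compare yours against; I can only judge it on its own terms, and it is correct. The key step, that $\beta^{(t)}=\{x\in\beta:\mu(x)\ge t\}$ is an ordinary basis of the level subspace $\mu_t$ (independence inherited from $\beta$; spanning because any $v\in\mu_t$ expands with nonzero coefficients over some $x_i\in\beta$ and the fuzzy-basis identity forces $\min_i\mu(x_i)=\mu(v)\ge t$), pins down the multiset $\{\mu(x):x\in\beta\}$ via the basis-free quantities $\dim\mu_{t_i}$, and the telescoping sum then gives basis-independence of $\sum_{x\in\beta}\mu(x)$. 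Two small blemishes, neither fatal. First, your parenthetical claim that the values $\mu(x)$, $x\in\beta$, exhaust $\mathrm{Im}\,\mu$ can fail at the top value: $\mu(0)$ need not be attained on $\beta$ (take $U$ one-dimensional with $\mu(0)=1$ and $\mu=1/2$ off the origin); your formula survives because in that case $n_0=\dim\mu_{t_0}=0$, but the assertion as stated is false and should be dropped or restricted to $\mathrm{Im}\,\mu\setminus\{\mu(0)\}$. Second, invoking the paper's Remark for finiteness of $\mathrm{Im}\,\mu$ is mildly circular, since that remark is phrased in terms of $\dim\mu$, a quantity that is only well defined once the present theorem is proved; fortunately your first paragraph already yields finiteness directly, because $\mathrm{Im}\,\mu\subseteq\{\mu(x):x\in\beta\}\cup\{\mu(0)\}$ and $\beta$ is finite.
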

\noindent The above theorem is a prerequisite for the following definition of $dim~\mu$:
\begin{definition}\cite{Lubczonok}
Let $\mu$ be a fuzzy subspace of a finite dimensional vector space $U$. The \textbf{dimension} of $\mu$, denoted by $dim~\mu$ is defined as
\[dim~ \mu=\sum_{x \in \beta } \mu (x).\]

\end{definition}
\begin{definition}\cite{Zadeh}
	Let $f$ be any mapping from a set $X$ to a set $Y$ and $\mu$ be a fuzzy set in $X$. Then the image $f(\mu)$ of $\mu$ under $f$ is defined by \\
$$f(\mu)(y)=  \sup_{x\in f^{-1}(y)}\{\mu(x)\}~for~all~y\in Y.$$
\end{definition}
\noindent The following result is obvious:
\begin{proposition}\cite{Kumar}
Let $f$ be a homomorphism from a vector space $U$ to a vector space $V$. If $\mu$ is a fuzzy subspace of $U$, then $f(\mu)$ is a fuzzy subspace of $V$.
\end{proposition}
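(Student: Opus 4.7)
The plan is to verify the two defining conditions of a fuzzy subspace for $f(\mu)$ directly from the definition $f(\mu)(y)=\sup_{x\in f^{-1}(y)}\mu(x)$, transferring the corresponding inequalities for $\mu$ through the linearity of $f$.

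For condition (i), I would take arbitrary $u,v\in V$ and observe that whenever $x\in f^{-1}(u)$ and $y\in f^{-1}(v)$, linearity gives $f(x-y)=u-v$, so $x-y\in f^{-1}(u-v)$. Hence $f(\mu)(u-v)\geq \mu(x-y)\geq \min\{\mu(x),\mu(y)\}$. The key manipulation is then to pass to the supremum on the right: since $\mu(y)$ is constant in $x$ and $\min$ commutes with $\sup$ in the free variable, taking $\sup_x$ yields $f(\mu)(u-v)\geq \min\{f(\mu)(u),\mu(y)\}$, and a second $\sup_y$ gives $f(\mu)(u-v)\geq \min\{f(\mu)(u),f(\mu)(v)\}$.

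For condition (ii), I would fix $v\in V$ and $\alpha\in F$, treating $\alpha=0$ separately: in that case $f(\mu)(0)=\sup_{x\in \ker f}\mu(x)\geq \mu(0)$, which dominates every value of $\mu$ and hence of $f(\mu)$. For $\alpha\neq 0$, whenever $x\in f^{-1}(v)$ linearity of $f$ gives $\alpha x\in f^{-1}(\alpha v)$, so $f(\mu)(\alpha v)\geq \mu(\alpha x)\geq \mu(x)$, and taking $\sup$ over $x\in f^{-1}(v)$ produces $f(\mu)(\alpha v)\geq f(\mu)(v)$.

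The only subtlety that deserves a sentence is the case when $f^{-1}(u)$ or $f^{-1}(v)$ is empty (which cannot happen if $u,v$ are in the image of $f$, but could arise a priori): under the standard convention $\sup\emptyset=0$, the corresponding $f(\mu)$ value is $0$ and the required inequality becomes trivial. No genuine obstacle arises; the entire argument rests on the single, routine observation that linearity of $f$ lets one lift elements of $f^{-1}(u)\times f^{-1}(v)$ to elements of $f^{-1}(u-v)$ and $f^{-1}(\alpha v)$, which is exactly why the proposition is marked ``obvious'' in the statement.
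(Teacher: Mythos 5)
Your proof is correct. The paper itself gives no argument for this proposition --- it is labelled ``obvious'' and attributed to Kumar --- and your direct verification of conditions (i) and (ii), lifting elements of $f^{-1}(u)\times f^{-1}(v)$ to $f^{-1}(u-v)$ and of $f^{-1}(v)$ to $f^{-1}(\alpha v)$ via linearity and then passing to suprema, is exactly the standard argument one would supply; the interchange $\sup_x \min\{\mu(x),c\}=\min\{\sup_x\mu(x),c\}$ and the $\sup\emptyset=0$ convention are handled correctly.
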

\begin{definition} \cite{Kumar}Let $f$ be a homomorphism from a vector space $U$ to a vector space $V$. If $\mu$ is a fuzzy subspace of $U$, then $f(\mu)$ is called the homomorphic image of $\mu$ under $f$. In addition, if $f$ is an isomorphism from $U$ onto $V$, the $\mu$ and $f(\mu)$ are called isomorphic to each other.
\end{definition}
\begin{definition}\cite{Kumar}
Let $U$ and $V$ be vector spaces. Let $\mu$ and $\eta$ be fuzzy subspaces of $U$ and $V$ respectively. Then, $\eta$ is said to be isomorphic to $\mu$ if exists an isomorphism $f$ from $U$ onto $V$ such that $f(\mu) = \eta$. We denote it by $\mu \approx \eta$. It is easy to see that $\eta(0)=f(\mu)(0)=\mu(0)$ and   $Im\mu= Im\eta $.
\end{definition}
\noindent Recall the following well known results from the literature:
\begin{proposition} \label{2.13_1}
Let  $f:X \rightarrow Y$ be a one-one and onto map. Then, for a fuzzy subset $\mu$ in $X$ and  $t\in[0,1]$,  $f(\mu)_t=f(\mu_t)$. 
\end{proposition}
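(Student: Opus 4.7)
The plan is to prove the equality of the two sets by double inclusion, using the defining formula $f(\mu)(y)=\sup_{x\in f^{-1}(y)}\mu(x)$ together with the fact that bijectivity of $f$ reduces the supremum to a single value. Throughout, I take the convention $\mu_t=\{x\in X:\mu(x)\ge t\}$.

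For the inclusion $f(\mu_t)\subseteq f(\mu)_t$, I would start with an arbitrary $y\in f(\mu_t)$, so that $y=f(x)$ for some $x\in X$ with $\mu(x)\ge t$. Then $x\in f^{-1}(y)$ and consequently
\[
f(\mu)(y)=\sup_{z\in f^{-1}(y)}\mu(z)\ \ge\ \mu(x)\ \ge\ t,
\]
which gives $y\in f(\mu)_t$. Notice that this half of the argument does not use injectivity; surjectivity is implicit in the fact that $y=f(x)$ lies in the range.

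For the reverse inclusion $f(\mu)_t\subseteq f(\mu_t)$, let $y\in f(\mu)_t$. Since $f$ is bijective, the preimage $f^{-1}(y)$ is a singleton $\{x_0\}$, and therefore the supremum in the definition of $f(\mu)(y)$ collapses to
\[
f(\mu)(y)=\mu(x_0).
\]
Combining this with $f(\mu)(y)\ge t$ yields $\mu(x_0)\ge t$, so $x_0\in \mu_t$ and hence $y=f(x_0)\in f(\mu_t)$.

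Overall there is no real obstacle here: the statement is essentially bookkeeping about how level sets transform under bijections. The only place where care is needed is in the reverse inclusion, where injectivity must be invoked to replace the supremum over $f^{-1}(y)$ by the single value $\mu(x_0)$; without injectivity this inclusion can fail, since the sup could be attained only in a limiting sense and not realized by any individual preimage. Once injectivity is used at that step, the double inclusion closes and the equality $f(\mu)_t=f(\mu_t)$ follows.
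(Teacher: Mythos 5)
Your proof is correct: the forward inclusion follows from $f(\mu)(y)\ge\mu(x)$ for any preimage $x$, and the reverse inclusion correctly uses bijectivity to collapse the supremum over $f^{-1}(y)$ to the single value $\mu(x_0)$, which is exactly where injectivity (and, for nonemptiness of the preimage, surjectivity) is needed. The paper states this proposition without proof as a recalled well-known fact, and your double-inclusion argument is the standard one that would be expected.
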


\begin{proposition} \label{2.14_1}
Let   $\mu$ and $\eta$ be fuzzy subsets in  $X$. Then,
  \begin{enumerate} \item [(i)]$\mu=\eta$ if and only if $\mu_t=\eta_t$ for all $t \in [0,1]$,
\item[(ii)] $\mu_a=\mu_t \subsetneq \mu_s$ where  $t$ and $s \in Im\mu$ such that $s<a \leq t$ and there is no element of $Im \mu$ between $s$ and $t$.
 \end{enumerate}
\end{proposition}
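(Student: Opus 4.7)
The plan is to unwind both parts directly from the definition of a level subset, $\mu_t=\{x\in X:\mu(x)\geq t\}$, with no reliance on the vector-space structure. Both statements are essentially dictionary translations between a fuzzy set and its stratification, so the work is organizational rather than conceptual.

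For part (i), the forward implication is immediate: if $\mu$ and $\eta$ coincide as functions $X\to[0,1]$, then $\{x:\mu(x)\geq t\}=\{x:\eta(x)\geq t\}$ for every $t\in[0,1]$. The reverse direction is where a short argument is needed. I would fix an arbitrary $x\in X$, set $t_0:=\mu(x)$, and observe $x\in\mu_{t_0}=\eta_{t_0}$, yielding $\eta(x)\geq\mu(x)$. The symmetric choice $t_1:=\eta(x)$ reverses the inequality, so $\mu(x)=\eta(x)$ pointwise, hence $\mu=\eta$.

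For part (ii), assume $s,t\in Im\,\mu$ with $s<t$ and no element of $Im\,\mu$ in the open interval $(s,t)$, and let $s<a\leq t$. Since the assignment $r\mapsto\mu_r$ is antitone, $a\leq t$ gives $\mu_t\subseteq\mu_a$ at once. For the reverse inclusion, take any $x\in\mu_a$, so $\mu(x)\geq a>s$; since $\mu(x)\in Im\,\mu$ and $(s,t)$ contains no image value, the inequality $\mu(x)>s$ forces $\mu(x)\geq t$, i.e.\ $x\in\mu_t$. This establishes $\mu_a=\mu_t$. Finally, for $\mu_t\subsetneq\mu_s$, the non-strict inclusion again follows from antitonicity because $s<t$; strictness follows because $s\in Im\,\mu$ produces some $y\in X$ with $\mu(y)=s$, giving $y\in\mu_s\setminus\mu_t$.

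The only subtlety worth flagging is the use of the gap hypothesis in the inclusion $\mu_a\subseteq\mu_t$: one must exploit the fact that $\mu(x)$ itself belongs to $Im\,\mu$, so the assumption ``no image value strictly between $s$ and $t$'' upgrades the weak bound $\mu(x)>s$ to $\mu(x)\geq t$. Everything else is routine bookkeeping with the definition of level subsets.
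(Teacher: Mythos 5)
Your proof is correct. The paper states this proposition without proof (it is merely recalled as a well-known fact from the literature), and your argument is the standard one: the pointwise comparison via $t_0=\mu(x)$ in part (i), and the use of the gap in $Im\,\mu$ to upgrade $\mu(x)>s$ to $\mu(x)\geq t$ together with the witness $y$ with $\mu(y)=s$ for strictness in part (ii), are exactly what is needed.
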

\noindent  Here we mention that the Theorem 3.4 in \cite{Kumar} is not presented correctly. Below, we reformulate  this theorem and provide its proof:
\begin{theorem}
Let $\mu$ and $\theta$ be fuzzy subspaces of $U$ and $V$ respectively. Then, 
 \begin{center}
 $\mu\approx \eta $ if and only if $f(\mu_t)= \eta_t$ where $f:U \rightarrow V$ is an onto isomorphism and $t\in [0,1]$.  
 \end{center}
\end{theorem}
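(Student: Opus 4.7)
The plan is to prove both directions by a direct appeal to Propositions \ref{2.13_1} and \ref{2.14_1}, which translate equality of fuzzy sets into equality of all their level sets and commute the image operator with level cuts for bijections. The theorem is essentially a level-set reformulation of the definition of $\mu \approx \eta$, so no substantive new idea is needed; the work is in lining up the propositions correctly.

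For the forward direction, I assume $\mu \approx \eta$. By the definition of isomorphic fuzzy subspaces, there is an onto isomorphism $f:U\to V$ with $f(\mu)=\eta$. Since $f$ is bijective, Proposition \ref{2.13_1} applies: for every $t\in[0,1]$, $f(\mu)_t = f(\mu_t)$. Combining these gives $\eta_t = f(\mu)_t = f(\mu_t)$, which is the desired conclusion.

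For the reverse direction, I assume there is an onto isomorphism $f:U\to V$ such that $f(\mu_t)=\eta_t$ for every $t\in[0,1]$. Again, bijectivity of $f$ lets me invoke Proposition \ref{2.13_1} to rewrite $f(\mu_t)$ as $f(\mu)_t$, so that $f(\mu)_t = \eta_t$ for every $t\in[0,1]$. Then Proposition \ref{2.14_1}(i), which says that two fuzzy sets coincide iff all their level sets coincide, yields $f(\mu)=\eta$; by definition, this is exactly $\mu \approx \eta$.

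There is no real obstacle here: both directions are essentially one-line arguments once one has Proposition \ref{2.13_1} (to move $f$ past the $t$-cut) and Proposition \ref{2.14_1}(i) (to recover the fuzzy-set equality from level-set equalities). The only small point to be careful about is to use the hypothesis that $f$ is an isomorphism \emph{only} to the extent needed, namely its bijectivity, since Proposition \ref{2.13_1} requires a one-one onto map; linearity of $f$ is not used in the verification itself, but is built into the definition of $\mu \approx \eta$ and ensures that $f(\mu)$ is a fuzzy subspace of $V$.
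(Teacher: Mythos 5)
Your proposal is correct and follows essentially the same route as the paper: the necessity direction applies Proposition \ref{2.13_1} to get $\eta_t=f(\mu)_t=f(\mu_t)$, and the sufficiency direction combines Proposition \ref{2.13_1} with Proposition \ref{2.14_1}(i) to recover $f(\mu)=\eta$. No differences worth noting.
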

\begin{proof}
\textbf{Necessity.} Let $\mu\approx \eta $. Then, there  exists an isomorphism $f$ from $U$ onto $V$ such that $f(\mu) = \eta$. Then, for $t\in [0,1]$, in view of Proposition \ref{2.13_1}, we have $ \eta_t = f(\mu)_t= f(\mu_t)$.\\
\textbf{Sufficiency.} Let $f:U \rightarrow V$ be an onto isomorphism and $f(\mu_t)= \eta_t$ for all $t \in [0,1]$. Then, by Proposition \ref{2.13_1} and Proposition \ref{2.14_1}, we have $f(\mu)= \eta$. Hence $\mu\approx \eta $.
\end{proof}

\noindent Moreover, the following result is immediate :
\begin{proposition} \label{2.13}Let $\mu$ be a fuzzy subspace of a vector space $U$ and $\beta $ be a fuzzy basis of $\mu$. Also, let $A$ be an ordinary subspace. Then, \vspace{-.3 cm}
\begin{enumerate}
\item[(i)] the restriction $\mu \left | A\right.$ is a fuzzy subspace of $A$,\vspace{-.3 cm}
\item[(ii)] the subset $\beta \cap A$ is a fuzzy basis of $\mu \left | A\right.$. 
\end{enumerate} 
\end{proposition}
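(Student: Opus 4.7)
The plan is to treat the two parts separately. Part (i) is a direct verification: for $x,y\in A$ and $\alpha\in F$, since $A$ is a subspace we have $x-y\in A$ and $\alpha x\in A$, and on such elements the restriction $\mu\left|A\right.$ agrees with $\mu$. Hence the two defining inequalities of a fuzzy subspace for $\mu\left|A\right.$ on $A$ follow verbatim from the corresponding inequalities that $\mu$ already satisfies on $U$.

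For Part (ii) I would split the argument into three sub-claims: $\beta\cap A$ is linearly independent, $\beta\cap A$ spans $A$, and $\beta\cap A$ satisfies the fuzzy basis identity with respect to $\mu\left|A\right.$. Linear independence is inherited from $\beta$ because $\beta\cap A\subseteq\beta$. The fuzzy basis identity transfers for free: if $x_1,\dots,x_n\in\beta\cap A$ and $\alpha_i\in F$, then $\alpha_1 x_1+\cdots+\alpha_n x_n$ lies in $A$ and both sides of the identity are unchanged on replacing $\mu$ by $\mu\left|A\right.$, so the fuzzy basis identity for $\beta$ as a fuzzy basis of $\mu$ immediately yields the same identity for $\beta\cap A$ as a fuzzy basis of $\mu\left|A\right.$.

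The genuine obstacle is the spanning step, because, read literally, the statement is false for an arbitrary subspace: in $\mathbb{R}^{2}$ with basis $\{e_{1},e_{2}\}$, the line $A=\mathrm{span}\{e_{1}+e_{2}\}$ satisfies $\beta\cap A=\emptyset$. The proposition only goes through when $A$ is of a special form, and the case actually needed in the sequel is a level subspace $A=\mu_{t}$. Under this reading the argument proceeds as follows: given $x\in\mu_{t}$, write $x=\alpha_{1}x_{1}+\cdots+\alpha_{n}x_{n}$ with distinct $x_{i}\in\beta$ and all $\alpha_{i}\neq 0$, and apply the fuzzy basis identity for $\beta$ to obtain $\mu(x)=\min_{i}\mu(x_{i})$. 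Since $x\in\mu_{t}$, we have $\mu(x_{i})\geq\mu(x)\geq t$ for every $i$, so $x_{i}\in\mu_{t}=A$, and the expression for $x$ already involves only elements of $\beta\cap A$.

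I would therefore write the proof under the understanding that $A$ is meant to be a level subspace of $\mu$ (or, more generally, any subspace closed under the operation ``take the individual summands of a fuzzy-basis expansion''), and flag this interpretive point; if a fully general subspace $A$ were intended, one would need to add a hypothesis such as ``$A$ is generated by a subset of $\beta$'' to rule out the counterexample above.
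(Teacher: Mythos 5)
Your proposal is correct, and in fact it does more than the paper does: the paper states this proposition with no proof at all, merely labelling it ``immediate,'' so there is no argument of the author's to compare yours against. Your verification of part (i) is exactly the routine check one would expect. The important contribution is your observation about part (ii): as literally stated, with $A$ an arbitrary ordinary subspace, the claim is false, and your counterexample is valid --- take $\mu$ constant on $U=\mathbb{R}^{2}$, so that $\beta=\{e_{1},e_{2}\}$ is a fuzzy basis, and let $A=\mathrm{span}\{e_{1}+e_{2}\}$; then $\beta\cap A=\emptyset$, which cannot be a basis of the one-dimensional space $A$. So the proposition needs a hypothesis relating $A$ to $\beta$ (or to the level structure of $\mu$), and the paper's ``immediate'' conceals a genuine error in the statement.

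Your repair is also the right one for this paper's purposes. In Section 3 the only restrictions ever used are $\mu|_{\mu_{t}}$, and for $A=\mu_{t}$ your spanning argument is sound: writing $x\in\mu_{t}$ as $x=\alpha_{1}x_{1}+\cdots+\alpha_{n}x_{n}$ with distinct $x_{i}\in\beta$ and all $\alpha_{i}\neq 0$, the fuzzy basis identity gives $\mu(x)=\min_{i}\mu(x_{i})$, hence $\mu(x_{i})\geq\mu(x)\geq t$ and each $x_{i}\in\beta\cap\mu_{t}$; linear independence and the fuzzy basis identity for $\mu|_{\mu_{t}}$ then follow by restriction. (One small point worth flagging explicitly: the fuzzy basis identity as defined in the paper should be read with nonzero scalars, since otherwise already $n=1$, $\alpha_{1}=0$ would force $\mu$ to be constant; your argument implicitly uses this reading and it is the standard one.) Your suggested general hypothesis --- that $A$ be generated by a subset of $\beta$ --- is the correct level of generality if one wants to salvage the proposition as stated. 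I would keep your interpretive remark in any written version, since it documents a defect in the source rather than a gap in your own reasoning.
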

\noindent Moreover, we have:
\begin{theorem} \cite{Lubczonok}
Let $\mu$ be a fuzzy subspace of a vector space $U$ such that $Im\mu$ is upper well ordered. Let $A$ be a proper subspace of $U$ and $\beta_A$ be a fuzzy basis of $\mu|A$. Then, there exists $w\in U\sim A$ such that $\beta^*=\beta \cup\{w\}$ is a fuzzy basis of $\mu|A^*$ where $A^*=\langle A\cup \{w\} \rangle$.
\end{theorem}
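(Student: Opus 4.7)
The plan is to choose $w$ in a principled way, using the hypothesis that $\operatorname{Im}\mu$ is upper well ordered, and then verify directly that $\beta_A\cup\{w\}$ satisfies the defining equation of a fuzzy basis.

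First I would note that $\beta_A\cup\{w\}$ is automatically an ordinary basis of $A^{*}=\langle A\cup\{w\}\rangle$ whenever $w\in U\setminus A$, so only the fuzzy-basis equation has content. The natural candidate for $w$ is an element that maximises $\mu$ on $U\setminus A$. Consider the set $S=\{\mu(y):y\in U\setminus A\}\subseteq\operatorname{Im}\mu$; because $A$ is proper, $S$ is non-empty, and upper well-orderedness guarantees that $S$ has a maximum $t_{0}$, which is attained by some $w\in U\setminus A$ with $\mu(w)=t_{0}$.

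The key lemma will be: for every $x\in A$, $\mu(w+x)=\min\{\mu(w),\mu(x)\}$. One inequality is the fuzzy-subspace axiom. For the other, I would split on whether $\mu(x)\geq t_{0}$ or $\mu(x)<t_{0}$. If $\mu(x)\geq t_{0}$ then $w+x\in U\setminus A$ forces $\mu(w+x)\leq t_{0}$, matching the lower bound. If $\mu(x)<t_{0}$, I apply the fuzzy-subspace axiom to the identity $x=(w+x)-w$: if $\mu(w+x)\geq t_{0}$ one would get $\mu(x)\geq t_{0}$, a contradiction, so $\mu(w+x)<\mu(w)$ and hence $\mu(x)\geq\mu(w+x)$, giving equality with the lower bound $\mu(x)$.

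Once this lemma is in hand, the fuzzy-basis identity for $\beta_A\cup\{w\}$ on an arbitrary finite combination $\alpha_0 w+\alpha_1 x_1+\cdots+\alpha_n x_n$ with $x_i\in\beta_A$ reduces to two cases. If $\alpha_0=0$, the equality is exactly the statement that $\beta_A$ is a fuzzy basis of $\mu|A$. If $\alpha_0\neq 0$, I rewrite the combination as $\alpha_0(w+\alpha_0^{-1}\sum\alpha_i x_i)$; using $\mu(\alpha v)=\mu(v)$, the key lemma, and then again the fuzzy-basis property of $\beta_A$ applied to $\sum\alpha_0^{-1}\alpha_i x_i\in A$, I obtain $\min\{\mu(w),\mu(x_1),\dots,\mu(x_n)\}$, as required.

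The main obstacle is the key lemma, and specifically the case $\mu(x)<t_{0}$, where the argument truly uses the maximality of $\mu(w)$ on $U\setminus A$. The role of the upper well-ordering hypothesis is exactly to make this maximum exist; without it, one could only take a supremum, which need not be attained, and the choice of $w$ would break down.
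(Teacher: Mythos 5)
The paper states this theorem without proof, citing Lubczonok, so there is no in-paper argument to compare against; judged on its own, your proof is correct and is essentially the standard (Lubczonok-style) argument. Choosing $w$ to attain the maximum of $\mu$ on $U\setminus A$ (which exists precisely because $\operatorname{Im}\mu$ is upper well ordered), proving $\mu(w+x)=\min\{\mu(w),\mu(x)\}$ for $x\in A$ by the two-case analysis you give, and then reducing the general linear combination to this lemma via $\mu(\alpha v)=\mu(v)$ and the fuzzy-basis property of $\beta_A$ is exactly the right route, and each step checks out.
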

\noindent Finally, by using the above theorem repeatedly, the following can be proved easily:
\begin{theorem}\cite{Lubczonok} \label{2.5}
Let $\mu$ be a fuzzy subspace of a finite dimensional vector space $U$. Let $A$ be a proper subspace of $U$. Let $\beta_A = \{x_1, x_2, \cdots, x_k\}$ be a fuzzy basis of $\mu \left | A\right.$. Then, there exist a fuzzy basis $\beta=\{x_1, x_2, \cdots, x_k, x_{k+1}, \cdots, x_n\}$ of $\mu$. 
\end{theorem}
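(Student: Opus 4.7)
The plan is to obtain the desired extension by iterating the one-step extension supplied by the preceding theorem, so the first task is to ensure that its hypothesis (that $Im\mu$ is upper well ordered) is automatic in our setting. Invoking Theorem \ref{2.6}, $\mu$ already has some fuzzy basis $\{y_1,\dots,y_n\}$ of $U$; the defining identity of a fuzzy basis then shows that every value $\mu(x)$ is a minimum of finitely many of the numbers $\mu(y_i)$, so $Im\mu$ is finite and hence trivially upper well ordered. Moreover, $Im(\mu|B)\subseteq Im\mu$ is finite for every subspace $B$, so the well ordering hypothesis is inherited by every restriction that will appear in the iteration.

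With this in hand I would proceed by downward induction on the codimension $n-k$, where $n=\dim U$ and $k=\dim A$. The base case $n-k=0$ forces $A=U$, so $\beta_A$ itself is already the required fuzzy basis. For the inductive step, $A$ is a proper subspace, so the preceding theorem furnishes some $w\in U\setminus A$ such that $\beta_A\cup\{w\}$ is a fuzzy basis of $\mu|A^*$, where $A^*=\langle A\cup\{w\}\rangle$. Since $w\notin A$ one has $\dim A^*=k+1$, so the codimension of $A^*$ in $U$ is $n-k-1$; the inductive hypothesis, applied to $A^*$ together with its fuzzy basis $\beta_A\cup\{w\}$, then produces vectors $x_{k+2},\dots,x_n$ extending $\beta_A\cup\{w\}$ to a fuzzy basis of $\mu$. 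Relabelling $w$ as $x_{k+1}$ delivers the desired $\beta$.

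I do not anticipate any serious obstacle here, since termination is forced by finite dimensionality and the upper well ordered property of $Im\mu$ passes to all restrictions. The only point that needs a moment of care is checking that the hypotheses of the one-step theorem remain satisfied at each stage of the induction as $A$ grows to $A^*$, which is precisely what the observation $Im(\mu|A^*)\subseteq Im\mu$ in the first paragraph takes care of; once this is granted the argument is purely mechanical.
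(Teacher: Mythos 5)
Your proposal is correct and follows exactly the route the paper indicates, namely iterating the one-step extension theorem until $U$ is reached; the paper itself only remarks that the result follows ``by using the above theorem repeatedly.'' Your additional observation that $Im\,\mu$ is finite (hence upper well ordered, and this passes to every restriction $\mu|A^*$) is a worthwhile explicit check of the hypothesis that the paper leaves tacit.
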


\noindent For the sake of completeness, below we prove the result from \cite{Kumar} related to isomorphic image of a fuzzy basis.
\begin{theorem} \label{2.14}
Let $f$ be an isomorphism from a vector space $U$ onto a vector space $V$ and $\mu$ be a fuzzy subspace of $U$. Let $U$ and $V$ be~~ finite dimensional~~~ vector ~~spaces.  If \mbox{$\beta = \{x_1, x_2, \cdots, x_n\}$} is a fuzzy basis of $U$, then $f(\beta)$ is a fuzzy basis of the fuzzy subspace $f(\mu)$ where \mbox{$f(\beta)=\{f(x_1),f(x_2), \cdots, f(x_n)\}$.}
\end{theorem}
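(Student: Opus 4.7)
The plan is to verify the two defining properties of a fuzzy basis for $f(\beta)$: first, that $f(\beta)$ is an ordinary basis of $V$, and second, that $f(\mu)$ behaves additively-minimally on finite linear combinations from $f(\beta)$.

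The first point is routine linear algebra: an isomorphism of vector spaces sends a basis to a basis, so since $\beta$ is (by definition of fuzzy basis) an ordinary basis of $U$, the image $f(\beta) = \{f(x_1), \ldots, f(x_n)\}$ is an ordinary basis of $V$. The cardinality is preserved because $f$ is injective.

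For the second point, the key observation is that since $f$ is a bijection, for every $y \in V$ the preimage $f^{-1}(y)$ is a singleton, so from the definition of image of a fuzzy set we get the simplified formula
\[
f(\mu)(y) = \mu(f^{-1}(y)).
\]
In particular, $f(\mu)(f(x_i)) = \mu(x_i)$ for each $i$. Now take any finite subset $\{f(x_{i_1}), \ldots, f(x_{i_k})\} \subseteq f(\beta)$ and scalars $\alpha_1, \ldots, \alpha_k \in F$. By linearity of $f$,
\[
\alpha_1 f(x_{i_1}) + \cdots + \alpha_k f(x_{i_k}) = f\bigl(\alpha_1 x_{i_1} + \cdots + \alpha_k x_{i_k}\bigr),
\]
so applying the singleton-preimage formula and then the fuzzy basis property of $\beta$ for $\mu$ gives
\[
f(\mu)\bigl(\alpha_1 f(x_{i_1}) + \cdots + \alpha_k f(x_{i_k})\bigr) = \mu\bigl(\alpha_1 x_{i_1} + \cdots + \alpha_k x_{i_k}\bigr) = \min\{\mu(x_{i_1}), \ldots, \mu(x_{i_k})\}.
\]
Substituting $\mu(x_{i_j}) = f(\mu)(f(x_{i_j}))$ on the right gives exactly the required minimum, completing the verification.

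There is no real obstacle here; the whole argument leans on the fact that bijectivity collapses the sup in the definition of $f(\mu)$ to a single value, after which the fuzzy basis condition on $\beta$ transports directly through the linearity of $f$. The only place one must be careful is that the finite subset is arbitrary (not all of $f(\beta)$), but since the corresponding preimage under $f$ is an arbitrary finite subset of $\beta$, the hypothesis on $\beta$ applies verbatim.
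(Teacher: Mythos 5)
Your proposal is correct and follows essentially the same route as the paper's proof: both reduce the claim to the observation that bijectivity of $f$ collapses the supremum in the definition of $f(\mu)$ so that $f(\mu)(f(x)) = \mu(x)$, and then transport the fuzzy linear independence of $\beta$ through the linearity of $f$. Your treatment of arbitrary finite subsets of $f(\beta)$ is slightly more explicit than the paper's, but the argument is the same.
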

\begin{proof}
Let $\beta = \{x_1, x_2, \cdots, x_n\}$ be a fuzzy basis of $U$. Then, $\beta$ is a basis of $U$. It is well known that for an isomorphism $f$ from $U$ onto $V$, $f(\beta)=\{f(x_1),f(x_2), \cdots, f(x_n)\}$ forms a basis of $f(U)=V$. Now, to show that $f(\beta)$ is a fuzzy basis of $f(U)=V$, we only need to establish that $f(\beta)$ is fuzzy linearly independent. That is if $c_1, c_2, \cdots, c_n$ are scalars, then we shall show
$$f(\mu)(c_1f(x_1)+\cdots+c_nf(x_n) = min~\{f(\mu)\left (f(x_1)\right), \cdots, f(\mu)\left (f(x_n)\right)\}. $$ 
So, for the scalers $c_1, c_2, \cdots, c_n$, consider
\begin{eqnarray*}
f(\mu)(c_1f(x_1)+\cdots+c_nf(x_n)) &=&
f(\mu)\left(f(c_1 x_1 +\cdots+c_n x_n) \right)\\
&=& \mu (c_1 x_1 +\cdots+c_n x_n) ~~~~\text(as~ f ~is~ a~one~	to~one~map)\\
&=& min~\{\mu ( x_1), \cdots, \mu(x_n)\}\\&&~~~~~~~~~~~~~~~~\text(as~ \beta ~is~ fuzzy~linearly~	independent)\\
&=&min~\{f(\mu) (f( x_1)), \cdots, f(\mu)(f(x_n))\}.
\end{eqnarray*}
This completes the proof.
\end{proof}

  \section{Isomorphism and Fuzzy Subspaces}
For certain convenience, we follow a convention. For each $t\in [0,1]$, we have a level subset $\mu_t$ in $U$, and  consider the restriction $\mu | _{\mu_t}$ for $t\in [0,1]$. Since $U$ is finite dimensional, $\mu_t$ is also a finite dimensional subspace for each $t\in [0,\mu(0)]$. Therefore we can talk of dimension function of fuzzy subspace $\mu|_{\mu_t}$. On the other hand, for $t \in~ ]\mu(0), 1]$, the level subset $\mu_t $ is an empty set, and we set $dim~\mu| _{\mu_t} =-1$.

\begin{theorem}
	Let $\mu $ and $\eta$ be fuzzy subspaces of finite dimensional vector spaces $U$ and $V$, respectively. Then,
	$$
	\mu \approx \eta \text{~~if and only if~~} dim ( {\mu}\left |_{\mu_t} \right.)=dim ( {\eta}\left |_{\eta_t} \right.)~\text{for}~t\in[0,1].
	$$
\end{theorem}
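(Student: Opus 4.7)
I would separate the two implications. Necessity should follow almost mechanically from tools already assembled in the paper; sufficiency will require constructing the isomorphism explicitly by matching fuzzy bases, with the dimension function serving as the bookkeeping device.

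For necessity, suppose $\mu \approx \eta$ via an isomorphism $f\colon U \to V$ with $f(\mu)=\eta$. Proposition~\ref{2.13_1} gives $f(\mu_t)=f(\mu)_t=\eta_t$, so $f$ restricts to a vector-space isomorphism $\mu_t \to \eta_t$. Fixing a fuzzy basis $\beta$ of $\mu$, Proposition~\ref{2.13} says $\beta\cap\mu_t$ is a fuzzy basis of $\mu|_{\mu_t}$, and Theorem~\ref{2.14} applied to the restriction $f|_{\mu_t}$ shows that $f(\beta\cap\mu_t)$ is a fuzzy basis of $\eta|_{\eta_t}$. Since $f$ is bijective, $\eta(f(x))=f(\mu)(f(x))=\mu(x)$, so the two sums defining $\dim(\mu|_{\mu_t})$ and $\dim(\eta|_{\eta_t})$ agree term by term.

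For sufficiency, the strategy is to pick fuzzy bases $\beta=\{x_1,\dots,x_n\}$ of $\mu$ and $\beta^{*}=\{y_1,\dots,y_m\}$ of $\eta$, ordered in non-increasing fuzzy value, and to show that $n=m$ and $\mu(x_i)=\eta(y_i)$ for all $i$; then the linear extension $f(x_i):=y_i$ will be the required isomorphism. The key identity is
\[
\dim(\mu|_{\mu_t})=\sum_{j\,:\,\mu(x_j)\ge t}\mu(x_j),
\]
obtained by combining $\mu_t=\mathrm{span}\{x_j:\mu(x_j)\ge t\}$ with Proposition~\ref{2.13}. The right-hand side is a step function of $t$ whose jump at each $t\in\mathrm{Im}\,\mu$ equals $t\cdot|\{j:\mu(x_j)=t\}|$, so it determines both $\mathrm{Im}\,\mu$ and the multiplicity of every positive value among $\mu(x_1),\dots,\mu(x_n)$. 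Equating this to the analogous function built from $\eta$ forces $\mathrm{Im}\,\mu=\mathrm{Im}\,\eta$ and matching multiplicities, so the two bases can be reordered to align. Once $\mu(x_i)=\eta(y_i)$ for every $i$, the fuzzy-basis property on each side gives, for any $z=\sum c_i x_i$,
\[
f(\mu)(f(z))=\mu(z)=\min_{c_i\ne 0}\mu(x_i)=\min_{c_i\ne 0}\eta(y_i)=\eta(f(z)),
\]
whence $f(\mu)=\eta$ and $\mu\approx\eta$.

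The hard part I expect to be the step-function analysis, together with the subtle bookkeeping for any basis vectors that happen to carry fuzzy value $0$: such vectors contribute nothing to the sum above and are invisible to the dimension function. I would use the boundary convention $\dim(\mu|_{\mu_t})=-1$ for $t>\mu(0)$ to first pin down $\mu(0)=\eta(0)$, read off the positive part of $\mathrm{Im}\,\mu$ and its multiplicities from the jumps, and then argue that the counts of any trailing zero-valued basis vectors on the two sides must match so that $\dim U=\dim V$ and a genuine linear isomorphism is available.
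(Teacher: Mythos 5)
Your overall route is the paper's. The necessity half is identical in substance (restrict $f$ to $\mu_t$, push a fuzzy basis of $\mu|_{\mu_t}$ forward via Theorem~\ref{2.14}, sum). For sufficiency, the paper recovers $\mathrm{Im}\,\mu=\mathrm{Im}\,\eta=\{t_0>t_1>\cdots>t_n\}$ by an inductive supremum-matching argument and then builds a nested chain of bases $\beta(\mu_{t_0})\subseteq\cdots\subseteq\beta(\mu_{t_n})$, mapping it level by level onto the corresponding chain for $\eta$; since a nested chain of bases of the level subspaces is exactly a fuzzy basis sorted by non-increasing value, your identity $\dim(\mu|_{\mu_t})=\sum_{j:\,\mu(x_j)\ge t}\mu(x_j)$ and the jump analysis are the same bookkeeping done in one stroke rather than by induction. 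Your closing verification that $f(\mu)=\eta$ via the fuzzy-basis property is a genuine improvement: the paper merely asserts this.

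However, the step you defer to the end --- ``argue that the counts of any trailing zero-valued basis vectors on the two sides must match so that $\dim U=\dim V$'' --- cannot be carried out, because it is false. Take $U=F^2$, $V=F^3$, and let $\mu$ and $\eta$ both be the characteristic function of the zero subspace, i.e.\ $\mu(0)=\eta(0)=1$ and both take the value $0$ elsewhere. These are fuzzy subspaces; for $t\in\,]0,1]$ the level subspaces are $\{0\}$ with fuzzy dimension $0$, and at $t=0$ the level subspace is the whole space but every fuzzy-basis vector carries value $0$, so $\dim(\mu|_{\mu_0})=\dim(\eta|_{\eta_0})=0$ as well. Thus $\dim(\mu|_{\mu_t})=\dim(\eta|_{\eta_t})$ for every $t\in[0,1]$, yet no isomorphism $f:U\to V$ exists at all, so $\mu\not\approx\eta$. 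As you yourself observe, value-$0$ basis vectors are invisible to the dimension function, so the hypothesis simply does not determine $\dim U$; the implication (and hence the theorem as stated) holds only under an additional assumption such as $\dim U=\dim V$, or $\mu$ and $\eta$ strictly positive. You should be aware that this is not a defect peculiar to your write-up: the paper's own proof buries the identical gap in the unproved assertion ``Clearly, $|\beta(\mu_{t_i})|=|\beta(\eta_{t_i})|$,'' which fails at $i=n$ precisely when $t_n=0$. In short, you have correctly isolated the one step that cannot be completed; the right conclusion is not to complete it but to add the missing hypothesis.
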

\begin{proof}
\textbf{	Necessity.} Since $\mu \approx \eta $, there exists an isomorphism $f$ from $U$ onto $V$, such that $f(\mu)=\eta$. Now, let $t \in [0,1]$. It is easy to see that $\eta(0)=f(\mu)(0)=\mu(0)$. Thus if $t\in]\mu(0), 1]$, then clearly, $dim ( {\mu}\left |_{\mu_t} \right.)=-1=dim ( {\eta}\left |_{\eta_t} \right.)$. So we let, $t\in[0,\mu(0)]$. Then by Theorem \ref{2.2}, $\mu_t$ is a subspace of $U$. Thus, in view of Theorem \ref{2.6}, there exists a fuzzy basis of the fuzzy subspace 
${\mu}\left |_{\mu_t} \right.$ of $\mu_t$. Let 
$\beta_t=\{x_1,x_2\cdots, x_k\}$ be a fuzzy basis of ${\mu}\left|_{\mu_t} \right.$. Observe that $f|\mu_t $ is an isomorphism from $\mu_t$ to $f(\mu_t)$. So by Theorem \ref{2.14}, $f(\beta_t)$ is a fuzzy basis of $f(\mu)|f(\mu_t)$. Moreover, by Theorem \ref{2.5}, $\beta_t$ can be extended to form a fuzzy basis $\beta=\{x_1,x_2\cdots, x_k, x_{k+1}\cdots x_n\}$ of $\mu$. Again as , $f$ is an isomorphism, by Theorem \ref{2.14}, $f(\beta)$ is a fuzzy basis of $f(\mu)$. Since $f$ is an isomorphism, $f(\mu)=\eta$ and 
$f(\mu_t)=f(\mu)_t =\eta_t$ (by Proposition \ref{2.13_1}). Thus,  
$${f(\mu)}\left|_{f(\mu_t)} \right.={\eta}\left|_{\eta_t} \right.$$ 
This implies that
$$
dim~{\eta}\left|_{\eta_t} \right.=dim~ {f(\mu)}\left|_{f(\mu_t)} \right. = \sum_{f(x)\in f(\beta_t)}f(\mu)|_{f(\mu_t)}(f(x))=\sum_{x \in\beta_t} \mu(x)=dim ~\mu \left| \right.\mu_t.
$$
\textbf{Sufficiency.} Firstly we claim that 
$$
\inf_{t_i\in [0,1]} \{t_i : \mu_{t_i} = \phi \}=  \inf_{t_i\in [0,1]} \{t_i : \eta_{t_i}= \phi \}.
$$
Suppose if possible 
$$
\inf_{t_i\in [0,1]} \{t_i : \mu_{t_i} = \phi \} <  \inf_{t_i\in [0,1]} \{t_i : \eta_{t_i}= \phi \}.
$$
Then there exists $t_r \in[0,1]$ with $\mu_{t_r} = \phi $ and 
$$
t_r  <  \inf_{t_i\in [0,1]} \{t_i : \eta_{t_i}= \phi \}.
$$
This implies that $\eta_{t_r} \neq \phi$ and hence  $dim~\left( \eta|\eta_{t_r}\right) \neq -1$. But as
$\mu_{t_r} = \phi $,  \mbox{$dim~\left( \mu|\mu_{t_r}\right)=-1$}. Thus 
$$
dim ( {\mu}\left |_{\mu_{t_r}} \right.) \neq dim ( {\eta}\left |_{\eta_{t_r}} \right.)~\text{for}~t_r\in[0,1].
$$
This contradiction establishes our claim. Let us write  
$$
t_{0}=\inf_{t_i\in [0,1]} \{t_i : \mu_{t_i} = \phi \} =  \inf_{t_i\in [0,1]} \{t_i : \eta_{t_i}= \phi \}.
$$
Here we observe that as $sup~ \mu =\mu(0)$ and $sup~ \eta =\eta(0)$, we have $\mu_{t_i} = \emptyset $ for all $t_i > \mu(0)$ and $\eta_{t_j} = \emptyset $ for all $t_j > \eta(0)$. Thus, $inf\{t_i :\mu_{t_i} = \emptyset\} = \mu(0)$ and $inf\{t_j :\eta_{t_j} = \emptyset \}= \eta(0)$. Consequently,
$$\mu (0) =t_0=\eta(0).$$ 
Now, in view of the hypothesis  
\begin{eqnarray*}
dim ( {\mu}\left |_{\mu_{t_0}} \right.) = dim ( {\eta}\left |_{\eta_{t_0}} \right.),
\end{eqnarray*}
where $\mu_{t_0} = \{x \in U : \mu(x)= t_0\}$ and $\eta_{t_0} = \{x \in U : \eta(x)= t_0\}$.
Then, $\mu_{t_0}$ and $\eta_{t_{0}}$ are ordinary subspaces of $U$ and $V$ respectively. Our next claim is that \begin{center}$\mu_{t_0}$ and $\eta_{t_{0}}$ have same dimensions.\end{center} Let $\beta (\mu_{t_0})$ and $\beta (\eta_{t_0})$ be the basis of $\mu_{t_0}$ and $\eta_{t_0}$ respectively.
Let $dim~ \mu_{t_0} = k_1$  and \mbox{$dim~ \eta_{t_0} = k_2$}. Then, 
$$
dim ( {\mu}\left |_{\mu_{t_0}} \right.) =  \sum_{x \in \beta (\mu_{t_0})}  {\mu}\left |_{\mu_{t_0}} \right. (x) = k_1t_0.
$$
Also, 
$$
dim ( {\eta}\left |_{\eta_{t_0}} \right.) =  \sum_{x \in \beta (\eta_{t_0})}  {\eta}\left |_{\eta_{t_0}} \right. (x) = k_2t_0.
$$
\noindent Thus in view of the hypothesis, $ k_1t_0 = k_2t_0$. Hence, $k_1 = k_2$. This proves the claim. \\
Next, consider the sets $Im~\mu \sim \{t_0\}$ and  $Im~\eta \sim \{t_0\}$. Then, we claim that
$$
sup\{ t_{i} : t_i \in Im ~\mu \sim \{t_0\}\} = sup\{ t_{i} : t_i \in Im ~\eta \sim \{t_0\}\}.
$$
Suppose, if possible 
$$
sup\{ t_{i} : t_i \in Im ~\mu \sim \{t_0\}\} > sup\{ t_{i} : t_i \in Im ~\eta \sim \{t_0\}\}.
$$
Then for some $t_s \in Im \mu \sim \{t_0\}$, we have 
$$
 t_s > sup\{ t_{i} : t_i \in Im ~\eta \sim \{t_0\}\}.
$$
Also note that $t_0 > t_s$. Since $t_s \notin  Im \eta$, and $t_0 \in Im~\eta$, by Proposition \ref{2.14_1}, we have  $\eta_{_{t_0}} = \eta_{_{t_s}}$. Therefore,
$$
dim \left( \eta | \eta_{_{t_s}}\right) = dim \left( \eta | \eta_{_{t_0}}\right)=k_1t_0,
$$
where $dim~ \eta_{t_0} = k_1$.
Let $\beta (\mu_{t_s})$  be a  basis of $\mu | \mu_{t_s}$. Since $\mu_{_{t_0}}$  is a proper subspace of $\mu_{_{t_s}}$ the dimension of $\mu_{_{t_s}}$ is strictly greater than that of $\mu_{_{t_0}}$. Thus if $\beta_(\mu_{t_0})$ is a basis of $\mu_{t_0}$ and $\beta_(\mu_{t_s})$  is an extended basis for $\mu_{t_s}$ obtained from $\beta_(\mu_{t_0})$, we have 
 $$k_1= dim ~\mu_{t_0}< dim~ \mu_{t_s}.$$
Consequently, 
$$
dim \left ( \mu | \mu_{t_s} \right.) =  \sum_{x \in \beta (\mu_{t_s})}  {\mu}\left |_{\mu_{t_s}} \right. (x)  > k_1t_0.
$$
Thus 
$$
dim \left ( \mu | \mu_{t_s} \right.) \neq dim \left ( \eta | \eta_{t_s} \right.)~\text{where}~t_s \in Im ~\mu \sim \{t_0\}.
$$
This contradiction establishes our claim. Now, we write 
$$
t_1=sup\{ t_{i} : t_i \in Im ~\mu \sim \{t_0\}\} = sup\{ t_{i} : t_i \in Im ~\eta \sim \{t_0\}\}.
$$
By repeating the above process for the sets 
$Im ~\mu \sim \{t_0, t_1\}$ and $Im ~\eta \sim \{t_0, t_1\}$, we obtain $t_2 \in [0, 1]$ such that 
$$
t_2= sup\{ t_{i} : t_i \in Im ~\mu \sim \{t_0, t_1\}\} = sup\{ t_{i} : t_i \in Im ~\eta \sim \{t_0, t_1\}\}.$$ 
Here we mention that as both $U$ and $V$ are finite dimensional vector spaces, both $Im \mu$ and $Im \eta$ are finite(by Remark 1). Thus  there is a decreasing sequence $t_0, t_1, \cdots t_n$ in $Im~\mu$ and $Im~\eta$ such that 
$$ Im~\mu=\{t_0, t_1, \cdots,  t_n\}=Im~\eta.$$
Note that $\mu_{_{t_n}}= U$ and $\eta_{_{t_n}}= V$. Now consider the following chain of subspaces 
$$\mu_{_{t_0}} \subseteq \mu_{_{t_1}}\subseteq \cdots \subseteq \mu_{_{t_n}}=U,$$
and
$$\mu_{_{t_0}} \subseteq \mu_{_{t_1}}\subseteq \cdots \subseteq \mu_{_{t_n}}=V.$$
Let $\beta(\mu_{_{t_0}})$ be a basis of $\mu_{_{t_0}}$. We construct a basis of $\mu_{_{t_1}}$ as follows:
Choose an element $e_1 \in \mu_{_{t_1}}\sim \mu_{_{t_0}}$ and generate a subspace $U_{(0,1)}$  with a basis $\beta(\mu_{_{t_0}}) \cup \{e_1\}$. Again, choose an element $e_2 \in \mu_{_{t_1}}\sim U_{(0,1)}$ and consider the subspace $U_{(0,2)}$ generated by $\beta(\mu_{_{t_0}}) \cup \{e_1, e_2\}$. Repeating this process, we obtain subspaces $U_{(0,1)}, U_{(0,2)} \cdots U_{(0,r_0)}= \mu_{_{t_1}}$ such that 
$$U_{(0,1)}\subseteq U_{(0,2)} \subseteq\cdots \subseteq U_{(0,r_0)}= \mu_{_{t_1}}$$ 
The basis of $\mu_{_{t_1}}$, so obtained is denoted by $\beta(\mu_{_{t_1}})$ and $\beta(\mu_{_{t_1}})=\beta(\mu_{_{t_0}}) \cup \{e_1,e_1 \cdots e_{_{r_0}} \}.$
We construct a basis of $\mu_{_{t_2}}$ by following the above procedure. During the process we obtain subspaces $U_{(1,1)}, U_{(1,2)} \cdots U_{(1,r_1)}= \mu_{_{t_1}}$ such that 
$$U_{(1,1)} \subseteq  U_{(1,2)} \subseteq \cdots\subseteq U_{(1,r_1)}= \mu_{_{t_2}}.$$ 
That is, by the above procedure, we obtain subspaces  $U_{(i,1)}, U_{(i,2)} \cdots U_{(i,r_i)}= \mu_{_{t_{i+1}}}$ for each pair of level subsets $\mu_{_{t_{i}}}$ and $\mu_{_{t_{i+1}}}$ such that
$$U_{(i,1)} \subseteq  U_{(i,2)} \subseteq \cdots\subseteq U_{(i,r_i)}= \mu_{_{t_{i+1}}}.$$
Let $\beta(\mu_{_{t_{i}}})$ be the basis of $\mu_{_{t_{i}}}$ obtained in this process. The costruction of $\beta(\mu_{_{t_{i}}})$  ensures that
 $$ \beta(\mu_{_{t_{0}}}) \subseteq \beta(\mu_{_{t_{1}}}) \subseteq \cdots \subseteq \beta(\mu_{_{t_{n}}}).$$
Similarly we can construct a chain of basis of subspaces $\eta_{_{t_{i}}}$ starting with a basis 
$\beta(\eta_{_{t_{0}}})$  of $\eta_{_{t_{0}}}$. Let the chain of such basis be 
$$ \beta(\eta_{_{t_{0}}}) \subseteq \beta(\eta_{_{t_{1}}}) \subseteq \cdots \subseteq \beta(\eta_{_{t_{n}}}).$$
Clearly,  $|\beta (\mu_{_{t_i}})|=|\beta (\eta_{_{t_i}})|$ and $\beta (\mu_{_{t_n}})$ and $\beta (\eta_{_{t_n}})$ are basis of $U$ and $V$ respectively. Now, let $f$ be any mapping which maps $\beta (\mu_{_{t_0}})$ onto $\beta (\eta_{_{t_n}})$ and satisfies 
$$f(e_i)=e_{i}^{'} \text {~for~} e_i\in \beta (\mu_{_{t_i}}) \sim \beta (\mu_{_{t_{i-1}}}),$$
where $e_{i}^{'}\in \beta (\eta_{_{t_i}}) \sim \beta (\eta_{_{t_{i-1}}})$. Extend $f$ by linearity from $U$ to $V$. Then, $f$ is an isomorphism from $U$ to $V $ such that $f(\mu)=\eta$. This completes the proof of the theorem.
\end{proof}
\section*{Acknowledgement}
I dedicate this work to Dr. Naseem Ajmal, one of the greatest fuzzy algebraist and my Ph. D. supervisor.

    \end{document}